\definecolor{myBlue}{rgb}{0.25, 0.0, 1.0}
\definecolor{myLightBlue}{rgb}{0.39, 0.58, 0.93}
\colorlet{myGreen}{green!50!black}
\colorlet{myLightgreen}{green}
\definecolor{AppleGreen}{rgb}{0.55, 0.71, 0.0}
\theoremstyle{plain}
\newtheorem{thm}{Theorem}[section]
\newtheorem{lem}[thm]{Lemma}
\newtheorem{fact}[thm]{Fact}
\newtheorem{conj}[thm]{Conjecture}
\newtheorem{cor}[thm]{Corollary}
\theoremstyle{definition}
\newcommand{\surface}{\mathbb{S}}
\newcommand{\defn}[1]{\textcolor{purple}{\emph{#1}}}
\DeclareMathOperator{\cl}{cl}
\DeclareMathOperator{\bd}{bd}
\title{Triangulated spheres with holes in triangulated surfaces}
\author[a]{Katie Clinch}
\affil[a]{School of Mathematics and Physics, University of Queensland, Australia}
\author[b]{Sean Dewar}
\affil[b]{School of Mathematics, University of Bristol, UK}
\author[c]{Niloufar Fuladi}
\affil[c]{LORIA, CNRS, INRIA, Université de Lorraine, Nancy, France}
\author[d]{Maximilian Gorsky}
\affil[d]{Discrete Mathematics Group, Institute for Basic Science, Daejeon, South Korea}
\author[d]{Tony Huynh}
\author[e]{Eleftherios Kastis}
\affil[e]{School of Mathematical Sciences, Lancaster University, UK}
\author[f]{Atsuhiro Nakamoto}
\affil[f]{Faculty of Environment and Information Sciences, Yokohama National University, Japan}
\author[e]{Anthony Nixon}
\author[g]{Brigitte Servatius}
\affil[g]{Mathematical Sciences, Worcester Polytechnic Institute, USA}
\date{}
\begin{document}

\maketitle
\begin{abstract}
Let $\surface_h$ denote a sphere with $h$ holes.  
Given a triangulation $G$ of a surface $\mathbb{M}$, we consider the question of when $G$ contains a spanning subgraph $H$ such that $H$ is a triangulated $\surface_h$. We give a new short proof of a theorem of Nevo and Tarabykin~\cite{NT} that every triangulation $G$ of the torus contains a spanning subgraph which is a triangulated cylinder.  For arbitrary surfaces, we prove that every high facewidth triangulation of a surface with $h$ handles contains a spanning subgraph which is a triangulated $\surface_{2h}$.   We also prove that for every $0 \leq g' < g$ and $w \in \mathbb{N}$, there exists a triangulation of facewidth at least $w$ of a surface of Euler genus $g$ that does not have a spanning subgraph which is a triangulated $\surface_{g'}$.  Our results are motivated by, and have applications 
 for, rigidity questions in the plane.  
\end{abstract}

\section{Introduction}

A \defn{sphere with $h$ holes} is the surface $\surface_{h}$ obtained from the sphere by removing $h$ open disks, whose closures are pairwise disjoint. The main question we address in this paper is when a triangulation of a surface contains a spanning subgraph which is a triangulated $\surface_{h}$. Before stating our main results we review some basic notions from topological graph theory.  For more details, we refer the reader to~\cite{moharthom}. 

For each $h \in \mathbb{N}$, we let $\mathbb{T}_h$ denote the orientable surface with $h$ handles and we let $\mathbb{N}_h$ denote the nonorientable surface with $h$ crosscaps. 
The \defn{Euler genus} of $\mathbb{T}_h$ and $\mathbb{N}_h$ is $2h$ and $h$, respectively. Recall that a connected compact topological space $\mathbb{M}$ is a \defn{surface (with boundary)} if every point in $\mathbb{M}$ has an open neighborhood homeomorphic to $\mathbb{D}:=\{(x,y) \in \mathbb{R}^2 : x^2+y^2 < 1\}$ or to $\mathbb{H}:=\{(x,y) \in \mathbb{D} : y \geq 0\}$.  The set of points of $\mathbb{M}$ with an open neighborhood homeomorphic to $\mathbb{H}$ is called the \defn{boundary} of $\mathbb{M}$. 
If a surface has an empty boundary, it is said to be \defn{closed}.
Due to compactness, the boundary of any surface is a collection of disjoint circles.
We let $\overline{\mathbb{M}}$ be the closed surface obtained from $\mathbb{M}$ by gluing a disk onto each boundary component. By the classification theorem of surfaces, every closed surface is homeomorphic to $\mathbb{T}_h$ or $\mathbb{N}_h$, for some $h$.  

For $X \subseteq \mathbb{M}$, we let $\bd(X)$ and $\cl(X)$ denote the (topological) boundary and closure of $X$, respectively. 
A \defn{simple arc} $P$ in $\mathbb{M}$ is the image of a continuous and injective map $I:[0,1] \to \mathbb{M}$.
The \defn{ends} of $P$ are the points $I(0)$ and $I(1)$.  
A \defn{circle} $C$ is the image of a continuous and injective map $I:S^1 \to \mathbb{M}$, where $S^1$ is the 1-dimensional sphere.  A circle $C$ in $\mathbb{M}$ is \defn{contractible} if some component $D$ of $\mathbb{M} \setminus C$ is a disk with $\bd(D)=C$.  Otherwise, $C$ is \defn{noncontractible}.
Two disjoint noncontractible circles $C$ and $D$ are \defn{homotopic} in $\mathbb{M}$ if some component $\Sigma$ of $\mathbb{M} \setminus (C \cup D)$ is an open cylinder with $\bd(\Sigma)=C \cup D$.  Note that this is equivalent to the usual definition of homotopy in topology (see~\cite{levine63}).

All graphs in this paper are finite and simple.  An \defn{embedding} of a graph $G$ on a surface $\mathbb{M}$ is a function $\psi$ with domain $V(G) \cup E(G)$, such that 
\begin{itemize}
    \item $\psi(v)$ is a point of $\mathbb{M}$ for each $v \in V(G)$,
    \item $\psi(u) \neq \psi(v)$ for all distinct $u,v\in V(G)$,
    \item $\psi(uv)$ is a simple arc in $\mathbb{M}$ with ends $\psi(u)$ and $\psi(v)$ for all $uv\in E(G)$, and
    \item $\psi(uv) \cap \psi(wx) \subset \{\psi(u), \psi(v)\}$, for all distinct $uv, wx \in E(G)$. 
\end{itemize}

The resulting \defn{embedded graph} is denoted by the pair $(G,\psi)$.  For every subgraph $H$ of $G$, we let $\psi(H)$ be the subset of $\mathbb{M}$ corresponding to $H$. To be precise, $\psi(H):=\bigcup_{v \in V(H)}\{\psi(v)\} \cup \bigcup_{xy \in E(H)} \psi(xy)$. A \defn{face} of $(G,\psi)$ is a connected component of $\mathbb{M} \setminus \psi(G)$. We only consider embeddings where every face is homeomorphic to an open disk. 

 We say $(G,\psi)$ is a \defn{triangulation} of $\mathbb{M}$, or equivalently $(G,\psi)$ is a \defn{triangulated} $\mathbb{M}$,  if $\bd(\mathbb{M}) \subseteq \psi(G)$ and the boundary of every face of $(G,\psi)$ corresponds to a copy of $K_3$ in $G$.  
 
 If $\mathbb{M}$ is a (closed) surface, the \defn{facewidth} of $(G,\psi)$ is the minimum number of intersections between $\psi(G)$  and any noncontractible circle in $\mathbb{M}$ \cite[Section 5.5]{moharthom}.
  Nakamoto and Nozawa~\cite{NN15}, and Nevo and Tarabykin~\cite{NT}, independently proved that every triangulation of the projective plane ($\mathbb{N}_1$) contains a spanning subgraph which is a triangulated disk ($\mathbb{S}_1$).  

\begin{thm}[\cite{NN15} and~\cite{NT}] \label{thm:projectiveplane}
        Every triangulation of the projective plane contains a spanning subgraph which is a triangulated disk.  
\end{thm}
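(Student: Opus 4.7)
My plan is to prove \Cref{thm:projectiveplane} by induction on $n = |V(G)|$, using Barnette's classical result that $K_6$ is the unique irreducible triangulation of $\mathbb{N}_1$; consequently, every triangulation of $\mathbb{N}_1$ with $n \geq 7$ vertices admits a \emph{contractible edge} $e = uv$, meaning that $G/e$ is again a simple triangulation of $\mathbb{N}_1$. For the base case $G = K_6$, one can directly exhibit a spanning triangulated disk: with vertex set $\{v_1,\ldots,v_6\}$, the $12$-edge subgraph $K_6 - \{v_3v_5, v_1v_6, v_5v_6\}$ is isomorphic to a triangulated disk with outer boundary $v_1v_2v_3$, obtained by placing $v_4$ in the interior of $v_1v_2v_3$, then $v_5$ in the subface $v_1v_2v_4$, and $v_6$ in $v_2v_3v_4$.

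For the inductive step, let $e = uv$ be a contractible edge, let $w_1, w_2$ be the two common neighbors of $u$ and $v$ in $G$ (the apexes of the two triangles $uvw_1, uvw_2$ incident to $e$), and let $z$ be the vertex of $G' := G/e$ produced by the contraction. By induction, $G'$ has a spanning triangulated disk $H'$, and my goal is to undo the contraction inside $H'$ to obtain a spanning triangulated disk $H$ of $G$. Writing $N_{H'}(z) = (y_1,\ldots,y_r)$ in the cyclic order of the disk embedding of $H'$, each $y_\ell$ is either a neighbor of only $u$, a neighbor of only $v$, or one of the two common neighbors $w_1, w_2$. A vertex split at $z$ is \emph{admissible} when both $w_1$ and $w_2$ appear in $N_{H'}(z)$ --- say $w_1 = y_i$ and $w_2 = y_j$ --- and the $G$-neighbors of $u$ among $\{y_1,\ldots,y_r\} \setminus \{w_1,w_2\}$ form a contiguous arc $y_{i+1},\ldots,y_{j-1}$, with the $G$-neighbors of $v$ forming the complementary arc. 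If the split is admissible, replacing $z$ by the edge $uv$ and distributing neighbors according to the two arcs yields a spanning triangulated disk $H$ of $G$, closing the induction.

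The main obstacle lies in ensuring admissibility of the split. Contiguity is automatic whenever the disk embedding of $H'$ near $z$ agrees with the surface embedding inherited from $G'$: the cyclic order at $z$ in $G'$ is obtained by concatenating the cyclic neighbor-sequences at $u$ and $v$ in $G$ with $w_1$ and $w_2$ at the two junctions, so any subsequence around $z$ in $H'$ respects this partition. The delicate cases arise when $H'$ uses a planar embedding near $z$ that diverges from the inherited surface embedding, or when one of $w_1, w_2$ is simply not adjacent to $z$ in $H'$. I would address these by strengthening the inductive hypothesis to require $H'$ be chosen with $w_1, w_2 \in N_{H'}(z)$ and the contiguity condition already in place, and by applying local edge-flip moves inside the triangulated disk $H'$ to repair it when it is not. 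Propagating the strengthened hypothesis through the induction --- so that at each step the contraction of the chosen edge $e$ gives rise to a disk $H'$ satisfying the required local structure around $z$ --- is likely the most delicate part of the argument.
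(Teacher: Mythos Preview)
The paper does not prove \Cref{thm:projectiveplane}; it is quoted as a known result of Nakamoto--Nozawa and Nevo--Tarabykin, with no argument given. There is therefore no proof in the paper to compare your proposal against. It is worth noting, however, that your approach via irreducible triangulations and edge contraction is precisely the style of argument the paper \emph{avoids} for the analogous torus result (\Cref{thm:toruscylinder}): the paper's short reproof instead uses Schrijver's theorem to find two disjoint noncontractible cycles and then maximizes the cylinder they bound, with no induction on the triangulation at all.

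As to the proposal itself, you have located the gap but not closed it. The admissibility of the split is the entire content of the inductive step, and you offer only a plan for securing it: strengthen the hypothesis so that $w_1,w_2 \in N_{H'}(z)$ with the contiguity condition, and repair violations by local flips. But the strengthened hypothesis, as you phrase it, refers to the vertex $z$ and the pair $\{w_1,w_2\}$, which are determined by the edge $e$ you chose in $G$ --- data not available when proving the statement for $G'$. A workable strengthening would have to be intrinsic to $G'$ (for instance, quantified over every vertex and every pair of its neighbours), and you have not shown that any such statement holds, nor that flips suffice to repair an arbitrary $H'$. Moreover, your claim that ``contiguity is automatic whenever the disk embedding of $H'$ near $z$ agrees with the surface embedding inherited from $G'$'' hides the real issue: a triangulated disk is a planar object, and there is no reason its embedding should agree locally with the projective-plane embedding of $G'$. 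Until these points are addressed, the argument remains an outline with its central step missing.
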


Nevo and Tarabykin~\cite{NT} also proved that every triangulation of the torus ($\mathbb{T}_1$) contains a spanning subgraph which is a triangulated cylinder ($\mathbb{S}_2$).

\begin{thm}[\cite{NT}] \label{thm:toruscylinder}
        Every triangulation of the torus contains a spanning subgraph which is a triangulated cylinder.
\end{thm}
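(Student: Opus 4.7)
The plan is to find two disjoint, homotopic, non-contractible simple cycles $C_1, C_2$ in $G$ such that one of the two open cylinders of $\mathbb{T}_1 \setminus (C_1 \cup C_2)$ contains no vertex of $G$ in its interior. Once such cycles are produced, the closed complementary cylinder $A$ contains every vertex of $G$, and since $G$ triangulates $\mathbb{T}_1$, the subgraph of $G$ induced by the vertices and edges lying in $A$ triangulates $A$, yielding a spanning triangulated $\surface_2$.

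Take $C_1$ to be a shortest non-contractible cycle in $G$. Then $C_1$ is simple; it is non-separating, since in $\mathbb{T}_1$ every non-contractible simple cycle is non-separating; and it is chord-free, because any chord together with a sub-arc of $C_1$ would produce a strictly shorter non-contractible cycle (using that $\pi_1(\mathbb{T}_1)$ is abelian, so at least one of the two theta-subcycles created by a chord must be non-contractible). Fix a side of $C_1$ in the torus, and let $\mathcal{F}$ denote the union of all triangles of $G$ having at least one vertex on $C_1$ and lying on that side. By chord-freeness, the part of the boundary of $\mathcal{F}$ other than $C_1$ is a closed walk $W$ in $G$ with $V(W) \cap V(C_1) = \emptyset$. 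In the generic case in which $W$ is a simple cycle, set $C_2 := W$; by construction the strip between $C_1$ and $C_2$ is a union of triangles of $G$ with no interior vertex, and we are done.

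The main obstacle is the degenerate case in which $W$ is not simple, so that some vertex $x \notin V(C_1)$ appears in the fans at two distinct vertices $v_i, v_j \in V(C_1)$. This gives edges $xv_i, xv_j \in E(G)$ on the chosen side, and combined with the shorter of the two arcs of $C_1$ between $v_i$ and $v_j$ forms a closed curve $\gamma$ of length at most $\lfloor |C_1|/2 \rfloor + 2$. A case analysis on $\gamma$ handles the situation: if $\gamma$ is non-contractible, then for $|C_1| \geq 5$ its length is strictly smaller than $|C_1|$, contradicting the minimality of $C_1$ (the small cases $|C_1| \in \{3,4\}$ are checked directly); if $\gamma$ is contractible, it bounds a disk $D$ on the chosen side, and we reroute $C_1$ by replacing the $C_1$-arc of $\partial D$ by the path $v_i x v_j$, which either strictly shortens $C_1$ (and contradicts minimality) or preserves $|C_1|$ while strictly decreasing the number of self-intersections of $W$. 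A well-founded induction on the lexicographic pair $(|C_1|, \#\text{self-intersections of } W)$ then reduces us to the generic simple-cycle case. The delicate technical point will be verifying that each reroute preserves non-contractibility and chord-freeness of $C_1$, so that the induction hypothesis can actually be invoked on the rerouted cycle.
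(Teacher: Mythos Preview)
Your approach differs substantially from the paper's. The paper notes that a triangulation of $\mathbb{T}_1$ has facewidth at least $3$, applies Schrijver's theorem to obtain two vertex-disjoint noncontractible (hence homotopic) cycles $C_1,C_2$, chooses such a pair with $\Sigma(C_1,C_2)$ inclusion-maximal, and finishes with a one-line triangle-pushing argument showing that $\Sigma(C_1,C_2)$ contains every vertex.

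Your argument has a genuine gap already in the ``generic'' case, before any rerouting is needed: you assert that the strip $\mathcal{F}$ has no interior vertex off $C_1$, so that the complementary cylinder is spanning. This is false. Take the standard triangulation of $\mathbb{T}_1$ on $\mathbb{Z}_5\times\mathbb{Z}_5$ with faces $(i,j)(i{+}1,j)(i{+}1,j{+}1)$ and $(i,j)(i{+}1,j{+}1)(i,j{+}1)$, let $C_1$ be the cycle on $\{(i,0)\}$, and stellate the face $(0,0)(1,0)(1,1)$ by a new degree-$3$ vertex $v$. Then $C_1$ is still a shortest noncontractible cycle and is chord-free; on the side containing $v$ the outer boundary $W$ of $\mathcal{F}$ is still the simple $5$-cycle on $\{(i,1)\}$; but every one of the three faces incident to $v$ contains a vertex of $C_1$, so $v$ lies in the interior of $\mathcal{F}$, and the closed complementary cylinder omits $v$. (Stellating a second face on the other side of $C_1$ makes both choices of side fail simultaneously.) Your lexicographic induction never detects this, since $W$ is already simple and no reroute fires. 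Repairing the argument would require additionally pushing $C_1$ or $W$ across such interior vertices, which lengthens $C_1$ and destroys the ``shortest noncontractible'' hypothesis you rely on for chord-freeness and for the non-simple-$W$ analysis; at that point the paper's direct maximality argument on an arbitrary pair of disjoint noncontractible cycles is both simpler and complete.
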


We propose the following generalization of the above two results to arbitrary surfaces. 
\begin{conj} \label{conj:main}
    For all $g \geq 0$, every triangulation of a surface of Euler genus $g$ contains a spanning subgraph which is a triangulated $\surface_g$.   
\end{conj}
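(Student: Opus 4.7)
The natural strategy is induction on the Euler genus $g$. The base case $g=0$ is immediate, since the only surface of Euler genus zero is the sphere $\surface_0$ itself. The case $g=1$ is exactly Theorem~\ref{thm:projectiveplane}, and the torus case of $g=2$ is Theorem~\ref{thm:toruscylinder}. This leaves the Klein bottle (Euler genus $2$, nonorientable) as an additional base case that would need to be handled directly, most likely via a dedicated argument in the spirit of the proof of Theorem~\ref{thm:toruscylinder}.

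For the inductive step, consider a triangulation $G$ of a surface $\mathbb{M}$ of Euler genus $g\geq 3$. The plan is to find a short noncontractible cycle $C$ in $G$ and use it to reduce to a surface of smaller Euler genus. When $\mathbb{M}$ is nonorientable, we would seek a $1$-sided noncontractible cycle; otherwise, a $2$-sided one. Let $N$ be a closed subcomplex of $G$ made up of triangles around $C$ that form a M\"obius band (if $C$ is $1$-sided) or an annulus (if $C$ is $2$-sided). Cutting $\mathbb{M}$ along $C$ and removing $\mathrm{int}(N)$ yields a surface $\mathbb{M}'$ with one or two new boundary components and Euler genus strictly smaller than $g$. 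Cap off these new boundary components with vertex-centered fan disks to obtain a closed surface $\mathbb{M}''$ together with an augmented triangulation $G''$, and invoke the inductive hypothesis to get a spanning triangulated $\surface_{g''}$ in $G''$, where $g''$ is the Euler genus of $\mathbb{M}''$.

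The final step is a lifting operation: delete from this spanning subgraph the auxiliary vertices and edges introduced by the capping, and reattach the subcomplex $N$ along $\bd(N)$ to produce a spanning triangulated $\surface_g$ inside $G$. The arithmetic of boundary components matches: a $2$-sided non-separating cut reduces Euler genus by $2$ and contributes $2$ extra holes, giving $(g-2)+2=g$, while a $1$-sided cut reduces it by $1$ and contributes $1$ extra hole, giving $(g-1)+1=g$. The separating case requires invoking the inductive hypothesis on each side and gluing along $C$, using the fact that the genera of the two pieces sum to $g$ and the cut contributes a boundary circle to each piece.

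The main obstacle is the compatibility of the lifted solution with the original triangulation: after applying induction inside $\mathbb{M}''$, the boundary cycles of the found $\surface_{g''}$ need not interact cleanly with the cap disks, and some of the fan edges one wishes to remove may have been essential to the triangulated $\surface_{g''}$ structure. Guaranteeing a robust choice of $C$ and $N$ for which the lifting goes through is the delicate part. The high-facewidth hypothesis appearing in the companion theorem suggests that sufficiently short, well-separated noncontractible cycles behave nicely in that setting, but the general conjecture likely requires either a more careful reduction --- perhaps choosing $C$ to lie in an annulus already disjoint from a carefully chosen portion of the putative inductive solution --- or an entirely different approach, such as a direct constructive decomposition of the triangulation into M\"obius bands, annuli, and a spherical residue.
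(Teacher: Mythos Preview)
The statement you are attempting to prove is \emph{Conjecture}~\ref{conj:main}, and the paper does not prove it. The paper explicitly poses it as an open problem and only establishes partial results: the torus case (Theorem~\ref{thm:toruscylinder}), the high-facewidth orientable case (Theorem~\ref{thm:highfacewidth}), and the tightness of the number of holes (Theorem~\ref{thm:fewerholes}). There is therefore no ``paper's own proof'' to compare against.

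Your proposal is not a proof either, and you acknowledge as much. The scheme --- cut along a noncontractible cycle, cap off, apply induction, then lift --- is the natural first attack, but the step you flag as ``the main obstacle'' is a genuine gap, not a technicality. After capping and applying induction in $\mathbb{M}''$, the spanning triangulated $\surface_{g''}$ you obtain may use the new cap vertices and fan edges in an essential way: the boundary cycles of the holes may pass through a cap vertex, or an interior triangle of the $\surface_{g''}$ may be one of the fan triangles. Deleting the caps then destroys the triangulated-surface structure, and there is no evident way to repair it locally while keeping the subgraph spanning. The arithmetic of hole counts you record is correct, but that bookkeeping presupposes the lifting succeeds.

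The paper's approach to the high-facewidth case (Theorem~\ref{thm:highfacewidth}) sidesteps this difficulty by working entirely inside the original surface: it finds $h$ pairs of homotopic nonseparating cycles bounding disjoint thin cylinders, shrinks those cylinders until they contain no vertices, and then simply deletes their interiors. No capping, no induction on genus of an auxiliary surface, and hence no lifting problem. The facewidth hypothesis is precisely what guarantees that enough such cycle pairs exist. Removing that hypothesis --- which is what the full conjecture demands --- is exactly the open content, and your outline does not close it.
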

 
We remark that \Cref{conj:main} is a strengthening of a conjecture of Nevo and Tarabykin~\cite{NT}, who conjectured that every triangulation of a surface contains a spanning subgraph which is planar and Laman (precise definitions will be given later).

Our main results are as follows. We begin by reproving~\Cref{thm:toruscylinder}.
The proof of~\Cref{thm:toruscylinder} in~\cite{NT} uses the set of irreducible triangulations\footnote{A triangulation of a surface is \defn{irreducible} if there is no edge whose contraction produces another triangulation of the surface.} of the torus.  Barnette and Edelson~\cite{BD88, BD89} proved that the set of irreducible triangulations of a surface is always finite, but the complete list is only known for surfaces of Euler genus at most 4 \cite{Sulanke06}. In contrast to the approach in~\cite{NT}, our proof of~\Cref{thm:toruscylinder} is very short and avoids using the list of irreducible triangulations of the torus.  

Next, we prove~\Cref{conj:main} for high facewidth triangulations of orientable surfaces. Recall that $(2h-1)!!=1\cdot3 \cdots (2h-1)$, where $(-1)!!=0!!=1$.  

\begin{thm} \label{thm:highfacewidth}
    There exists a function $\gamma(h) \in O(h \cdot 2^h \cdot (2h-1)!!)$ such that the following holds.  
    Let $(G,\psi)$ be a triangulation of $\mathbb{T}_h$ such that $(G,\psi)$ has facewidth at least $\gamma(h)$.
    Then $G$ contains a spanning subgraph $H$ such that $(H,\psi|_H)$ is a triangulated $\surface_{2h}$.
\end{thm}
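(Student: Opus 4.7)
The target $\surface_{2h}$ has the same Euler characteristic $2-2h$ as the ambient $\mathbb{T}_h$, so to exhibit a spanning subgraph $H$ of $G$ that triangulates an $\surface_{2h}$ sitting inside $\mathbb{T}_h$, it suffices to find $h$ pairwise disjoint open annular regions $A_1,\dots,A_h$ of $\mathbb{T}_h$ with the following properties: each $A_i$ is a union of open faces of $(G,\psi)$ together with the edges in its interior, each $A_i$ has no vertex of $G$ in its open interior, and the core curves of the $A_i$ form a \emph{cut system}, i.e.\ their union is non-separating in $\mathbb{T}_h$ (equivalently, removing the $A_i$ from $\mathbb{T}_h$ produces a sphere with $2h$ holes). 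Given such a family, the spanning subgraph $H$ of $G$ obtained by deleting every edge lying strictly inside $\bigcup_i A_i$ is embedded in the closed subsurface $\mathbb{M}':=\mathbb{T}_h\setminus \bigsqcup_i A_i\cong\surface_{2h}$; the boundary of $\mathbb{M}'$ is the disjoint union of $2h$ cycles of $H$, and the faces of $H$ inside $\mathbb{M}'$ are precisely the triangles of $G$ not contained in any $A_i$. Thus the theorem reduces to producing such a family.

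The strategy is an induction on $h$, with base case $h=1$ provided by \Cref{thm:toruscylinder}. For the inductive step, we use the classical fact (see~\cite{moharthom}) that in a triangulation of sufficiently large facewidth, any shortest non-contractible cycle $C$ in $G$ is induced and the triangles of $G$ incident to one chosen side of $C$ form an annulus $A$, bounded by $C$ together with a disjoint cycle $C'$ of $G$, and containing no vertex of $G$ in its open interior. Choosing $C$ to be non-separating makes $A$ a handle-cutting annular region. Cutting $(G,\psi)$ along $A$ and capping the two new boundary circles with disks produces a triangulation $(G_1,\psi_1)$ of $\mathbb{T}_{h-1}$; if its facewidth is at least $\gamma(h-1)$, we apply the inductive hypothesis to obtain $h-1$ further handle-cutting annular regions in $(G_1,\psi_1)$, which lift back to $h-1$ handle-cutting annular regions of $(G,\psi)$ disjoint from $A$. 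Together with $A$, these form the required cut-system family.

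The main obstacle is controlling the facewidth through the cut-and-cap step: a short non-contractible cycle of $(G_1,\psi_1)$ may cross one of the capping disks, and we must argue via rerouting that each such cycle corresponds, up to a bounded replacement, to a non-contractible cycle of $G$ disjoint from $A$; consequently the facewidth drops by at most a multiplicative constant at each recursion level, contributing a factor $2^h$ over the whole recursion. Moreover, at each level one must select the homotopy class of the next cutting cycle, because an arbitrary sequence of non-separating cuts need not terminate in a sphere; the number of admissible sequences, up to the natural equivalence of pairings of the eventual $2h$ boundary circles into $h$ handle-partners, is bounded by $(2h-1)!!$. Multiplying this by the linear factor $h$ coming from the depth of the recursion yields the announced bound $\gamma(h)\in O(h\cdot 2^h\cdot (2h-1)!!)$. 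The delicate combinatorial point, and the reason the factorial-type bound seems unavoidable, is guaranteeing that after all recursive choices the collection of $h$ cycles genuinely cuts $\mathbb{T}_h$ into $\surface_{2h}$ rather than into a bordered surface of positive genus.
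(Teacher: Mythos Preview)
Your high-level strategy --- cut one handle via a vertex-free annulus around a nonseparating cycle, recurse on the resulting genus-$(h-1)$ surface, and track the facewidth decay --- is indeed the backbone of the paper's argument. But two of your technical steps are unproved or misdirected, and as written they leave a genuine gap.

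The main gap is the lifting step. After you cut along $A$ and cap the two boundary circles with disks, the inductive hypothesis hands you $h-1$ annular regions in $(G_1,\psi_1)$, but nothing prevents those regions from meeting the capping disks, in which case they simply do not lift back to annuli in $(G,\psi)$ disjoint from $A$. Your ``rerouting'' remark concerns only the facewidth estimate, not the location of the inductively-found annuli. The paper's fix is \emph{not} to cap: it works throughout on the bordered surface $\mathbb{T}_{h,q}$, carries the number $q$ of holes as an explicit parameter of the induction, and at each step uses the Brunet--Mohar--Richter theorem to produce not one but roughly $(q+1)\cdot\phi(h-1,q+2)$ pairwise homotopic nonseparating cycles; a pigeonhole argument (the ``evil index'') then locates a block of $\phi(h-1,q+2)+4$ consecutive ones whose enclosed cylinder misses every hole. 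The central pair of that block becomes $C_h,C_h'$, while the many parallel cycles left on either side are precisely what certifies the facewidth of the cut surface for the next round. This is also where the asymptotics actually come from: the recursion $\phi(h,q)=(2q+2)(\phi(h-1,q+2)+5)+1$ with $\phi(1,q)=2q+5$ gives the closed form \eqref{eq:gamma}, and the product of the factors $(2q+2)$ for $q=0,2,\dots,2h-4$ is $2^{h-1}(2h-3)!!$.

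Your final paragraph misidentifies the obstruction. There is no danger of ``terminating in a bordered surface of positive genus'': cutting $\mathbb{T}_h$ successively along $h$ curves, each nonseparating in the current surface, always yields $\surface_{2h}$. No homotopy class needs to be chosen, and the $(2h-1)!!$ is not a count of admissible cutting sequences --- it is forced by the growing hole parameter in the recursion above. Separately, your claim that the triangles on one side of a shortest noncontractible cycle $C$ form a clean annulus bounded by $C$ and a \emph{disjoint} simple cycle $C'$ is not automatic and would itself require a facewidth hypothesis and an argument; the paper sidesteps this entirely by first obtaining each $C_i,C_i'$ as a genuinely disjoint homotopic pair and only afterwards shrinking the cylinder $\Sigma(C_i,C_i')$ to be inclusion-minimal, which forces it to be vertex-free.
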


More specifically, the closed form for our function $\gamma(h)$ is 
\begin{equation}\label{eq:gamma}
    \gamma(h) = (4h+1) \cdot 2^{h-1} \cdot (2h - 3)!! + \sum_{i=0}^{h-2} \left( (20i +11) \cdot 2^{i} \cdot (2i-1)!! \right).
\end{equation}
The function $\gamma(h)$ is bounded above by $(h \cdot (4h+1) \cdot 2^{h-1} \cdot (2h - 3)!!)$, which is asymptotically equivalent to $(h \cdot 2^{h} \cdot (2h - 1)!!)$.

Finally, we prove that the number of holes in~\Cref{thm:highfacewidth} and \Cref{conj:main} cannot be reduced.  

\begin{thm} \label{thm:fewerholes}
     For every $g > g' \geq 0$ and $w \in \mathbb{N}$, there exists a triangulation $(G,\psi)$ of a surface of Euler genus $g$ with facewidth at least $w$ such that $(H,\psi|_H)$ is not a triangulated $\surface_{g'}$ for any spanning subgraph $H$ of $G$.  
\end{thm}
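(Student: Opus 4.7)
My approach combines a topological necessary condition with an explicit construction. If $(H,\psi|_H)$ is a spanning triangulated $\surface_{g'}$ subgraph of $(G,\psi)$, the faces of $(H,\psi|_H)$ in $\mathbb{M}$ decompose into triangular disk faces and $g'$ ``hole faces''. Comparing the Euler formula for $(H,\psi|_H)$ in $\mathbb{M}$ (where $\chi(\mathbb{M}) = 2 - g$) with the Euler formula for the abstract triangulation of $\surface_{g'}$ (where $\chi(\surface_{g'}) = 2 - g'$) yields the identity
\[
\sum_{i=1}^{g'} \chi(F_i) = g' - g,
\]
where $F_1, \ldots, F_{g'}$ are the hole faces viewed as subsurfaces of $\mathbb{M}$. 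For $g'=0$ this is immediately violated (the empty sum is $0 \neq -g$), so every high-facewidth triangulation of a surface of Euler genus $g > 0$ satisfies the theorem in that subcase.

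\textbf{Construction for $g' \geq 1$.} For the remaining subcases I would give explicit constructions. Start with a minimal triangulation $T_0$ of a surface $\mathbb{M}$ of Euler genus $g$, and apply iterated barycentric subdivisions (or vertex splits) until the facewidth is at least $w$; call the result $(G,\psi)$. Facewidth $\geq w$ ensures every non-contractible cycle in $G$ has length $\geq w$. To block all spanning triangulated $\surface_{g'}$ subgraphs, the refinement is arranged so that the base cycle structure of $T_0$ is preserved: any hypothetical hole-cycle system of $H$ must interact with this structure in a constrained way. Specifically, the hole cycles of $H$, viewed as elements of $H_1(\mathbb{M};\mathbb{Q})$, span at most a rank-$(g'-1)$ subgroup, since they arise from $H_1(\surface_{g'}) \cong \mathbb{Z}^{g'-1}$ via the inclusion $\mathbb{M}^\sharp \hookrightarrow \mathbb{M}$; the construction ensures this rank bound is incompatible with the topology the hole faces must realize (namely the Euler characteristic deficit $g-g'$ distributed across their complement).

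\textbf{Main obstacle.} The main difficulty is that many configurations of hole cycles are topologically compatible with the Euler characteristic identity, and a uniform argument must handle all of them. When $g'$ is close to $g$ the rank bound $g' - 1$ is close to the rank of $H_1(\mathbb{M};\mathbb{Q})$ and the ``room'' for contradiction shrinks, so one must combine the facewidth constraint (forcing essential cycles in $G$ to have length $\geq w$) with the edge-budget identity $|E(G) \setminus E(H)| = 3(g-g') + \sum_i \ell_i$ (where $\ell_i$ are the hole cycle lengths), and with a fine analysis of how the long essential cycles of $G$ can be assembled into a valid hole-cycle system. I expect an inductive argument on $g$, or a tailored ``blocking gadget'' construction parametrized by the pair $(g,g')$, will be needed to close this most delicate case.
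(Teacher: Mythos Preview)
Your Euler-characteristic comparison is sound and dispatches the case $g'=0$ cleanly, but for $g'\ge 1$ there is a genuine gap: you never exhibit a triangulation for which an argument actually goes through. The facts you list---the identity $\sum_i\chi(F_i)=g'-g$ for the hole faces, the edge budget $|E(G)\setminus E(H)|=3(g-g')+\sum_i\ell_i$, and the rank bound $\operatorname{rk}\langle [C_1],\dots,[C_{g'}]\rangle\le g'-1$ in $H_1(\mathbb{M};\mathbb{Q})$---are all correct, but they hold for \emph{every} triangulation of $\mathbb{M}$, including the many that \emph{do} contain spanning triangulated $\surface_{g'}$'s (indeed the other theorems in the paper manufacture exactly such subgraphs). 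So these identities cannot by themselves force a contradiction; a specific obstructing construction is required, and ``iterated barycentric subdivision of a minimal triangulation'' together with ``the base cycle structure of $T_0$ is preserved'' is not an argument. Your own ``Main obstacle'' paragraph effectively concedes that the proof is unfinished.

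The paper closes this gap with a single uniform construction you are missing. Take any triangulation $T$ of $\mathbb{M}$ with facewidth at least $w$, and let $G$ be its \emph{face subdivision}: add one new (``white'') vertex inside each face of $T$ and join it to the three vertices of that face. Euler's formula on $T$ gives $|W|=2|B|+2g-4$, where $B$ and $W$ are the old and new vertices. Suppose some spanning $H\subseteq G$ is a triangulated $\surface_{g'}$; choose $H$ maximal and then $g'$ minimal. Every white vertex off the hole boundaries has degree exactly $3$ in $H$, and (using $|V(C_i)|\ge 4$ and maximality) every white vertex on a hole boundary has degree exactly $2$. Deleting each interior white vertex merges its three incident triangles into a single triangle, producing a triangulated $\surface_{g'}$ on $B\cup W_2$ with exactly $|W|$ triangular faces; applying Euler's formula on the sphere to this smaller object yields $|W|\le 2|B|+2g'-4<2|B|+2g-4=|W|$, a contradiction. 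The point is that the forced degree-$3$ structure of the white vertices rigidifies the local combinatorics enough to turn Euler counting into an obstruction---this is the missing idea, and nothing in your homological or edge-budget framework substitutes for it.
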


\subsection{Motivation} \label{sec:motivation}

The initial motivation for our results comes from rigidity theory, although we believe they are of independent interest. Our proofs do not use any rigidity theory.

In rigidity theory, a \defn{framework} $(G,p)$ is an ordered pair consisting of a graph $G$ and a map $p:V(G)\rightarrow \mathbb{R}^d$ assigning positions in Euclidean $d$-space to the vertices of $G$. The framework is \defn{$d$-rigid} if every edge-length-preserving continuous motion of the vertices arises from an isometry of $\mathbb{R}^d$. Considering $p$ as a vector in $\mathbb{R}^{d|V(G)|}$ we say that $(G,p)$ is \defn{generic} if the coordinates of $p$ form an algebraically independent set over $\mathbb{Q}$. It is well known \cite{AsimowRoth} that $d$-rigidity is a generic property; that is either every generic framework of $G$ is $d$-rigid or every generic framework of $G$ is not $d$-rigid. Hence we call a graph $G$ \defn{$d$-rigid} if there exists some generic framework $(G,p)$ in $\mathbb{R}^d$ that is $d$-rigid. Moreover, we say that $G$ is \defn{minimally $d$-rigid} if $G$ is $d$-rigid but $G-e$ is not $d$-rigid for every $e\in E(G)$. 

A closely related notion is infinitesimal rigidity. 
An \defn{infinitesimal motion} of $(G, p)$ is a map $\dot
p:V(G)\to \mathbb{R}^d$ satisfying the system of linear equations:
\begin{equation*}
(p(v)-p(w))\cdot (\dot p(v)-\dot p(w)) = 0 \qquad \mbox{ for all $vw \in E(G)$}.
\end{equation*}
The framework $(G,p)$ is \defn{infinitesimally $d$-rigid} if the only infinitesimal motions arise from isometries of $\mathbb{R}^d$. Infinitesimal $d$-rigidity is a sufficient condition for $d$-rigidity that is easier to work with, and in the generic case the two notions coincide \cite{AsimowRoth}.

When $d=1$ it is easy to prove that $G$ is $1$-rigid if and only if it is connected. When $d=2$, Pollaczek-Geiringer proved that $G$ is minimally 2-rigid if and only if $G$ is a Laman graph \cite{L70,PG27}. Here $G$ is a \defn{Laman graph} if $|E(G)|=2|V(G)|-3$ and every subgraph $G'$ of $G$ with at least one edge satisfies $|E(G')| \leq 2|V(G')|-3$. When $d>2$ such a characterisation is a central open problem but the case of triangulations of surfaces is well known, see~\cite{Fogel}.

The following natural question was first posed by Adiprasito and Nevo \cite{AN}, see also a very similar question in earlier work of Fekete and Jord\'an \cite{FJ05}. For a $d$-rigid graph $G$, how small can a subset $S \subseteq \mathbb{R}^d$ be, so that there is an infinitesimally rigid framework $(G,p)$ with $p(v) \in S$ for every vertex $v \in V(G)$?
We define $c_d(G)$ to be the smallest possible cardinality of such a set $S$,
and for a family of graphs $\mathcal{G}$, we set $c_d(\mathcal{G}) := \sup \{ c_d(G) :  G \in \mathcal{G} \}$.

It follows from observations of Fekete and Jord\'an that, if $\mathcal{G}_d$ is the family of $d$-rigid graphs, then $c_1(\mathcal{G}_1) = 2$ and $c_d(\mathcal{G}_d) = \infty$ for $d \geq 2$; see \cite[Section 4]{FJ05}, since in this case there are $d$-rigid graphs without any proper $d$-rigid subgraphs.
In contrast to this,
Kir\'{a}ly \cite{Kir} proved that,
if $\mathcal{T}_h$ and $\mathcal{N}_h$ are the family of triangulations of the surfaces $\mathbb{T}_h$ and $\mathbb{N}_h$ respectively, then $c_2(\mathcal{T}_h) = O(\sqrt{h})$ and $c_2(\mathcal{N}_h) = O(\sqrt{h})$,
and if $\mathcal{P}$ is the family of planar Laman graphs, then $c_2(\mathcal{P}) \leq 26$. 
Using Kir\'{a}ly's latter result in conjunction with the existence of triangulated disks and other such 2-rigid planar graphs,
Nevo and Tarabykin, in \cite{NT}, proved that $c_2(\mathcal{T}_1) \leq 26$ and $c_2(\mathcal{N}_h) \leq 26$ for $h \leq 2$. Kaszanitzky \cite{VK23} has recently extended these results  by showing that $c_2(\mathcal{T}_2) \leq 26$.

Since a triangulated $\surface_g$ is a 2-rigid planar graph \cite[Lemma 6]{VK23},
\Cref{thm:highfacewidth} implies the following analogous result for triangulations of arbitrary orientable surfaces with sufficiently large facewidth.

\begin{cor}
    Let $\gamma(h)$ be as in \eqref{eq:gamma}.
    Then $c_2(G) \leq 26$ for every triangulation $(G, \psi)$ of $\mathbb{T}_h$ of facewidth at least $\gamma(h)$.
\end{cor}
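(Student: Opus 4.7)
The plan is to combine Theorem \ref{thm:highfacewidth} with two standard facts from $2$-dimensional rigidity theory, so that the proof of the corollary reduces to a short chain of known results. The strategy is to locate inside $G$ a spanning subgraph that is both planar and Laman, and then invoke Király's bound on $c_2$ of planar Laman graphs.

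In more detail: I would first apply Theorem \ref{thm:highfacewidth} to $(G,\psi)$ to obtain a spanning subgraph $H$ of $G$ such that $(H,\psi|_H)$ is a triangulated $\surface_{2h}$. By \cite[Lemma 6]{VK23}, $H$ is a $2$-rigid planar graph. Since $2$-rigidity is a matroidal property (generated by the generic $2$-rigidity matroid), $H$ contains a spanning minimally $2$-rigid subgraph $L$; by Pollaczek-Geiringer's theorem, $L$ is a Laman graph, and since $L$ is a subgraph of the planar graph $H$, it is also planar. Király's result then furnishes an infinitesimally rigid framework $(L,p)$ with $p(V(L)) \subseteq S$ for some $S \subseteq \mathbb{R}^2$ of cardinality at most $26$. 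Because $L$ is spanning in $G$ and infinitesimal rigidity is preserved under adding edges, the framework $(G,p)$ is infinitesimally rigid, giving $c_2(G) \leq 26$.

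I do not anticipate any new obstacle in this deduction: all of the substantive work is absorbed into Theorem \ref{thm:highfacewidth}, which delivers the triangulated $\surface_{2h}$ subgraph under the hypothesis that the facewidth is at least $\gamma(h)$. The remaining steps\textemdash{}passing to a minimally $2$-rigid (Laman) spanning subgraph, observing that planarity is inherited by subgraphs, and using monotonicity of infinitesimal rigidity under edge addition\textemdash{}are standard and require no surface-topological input.
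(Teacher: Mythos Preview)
Your proposal is correct and follows exactly the approach indicated in the paper, which does not give an explicit proof but simply notes (in the sentence preceding the corollary) that it follows from Theorem~\ref{thm:highfacewidth}, the fact that a triangulated $\surface_g$ is a $2$-rigid planar graph \cite[Lemma~6]{VK23}, and Kir\'aly's bound $c_2(\mathcal{P})\leq 26$. You have correctly filled in the routine details the paper leaves implicit: passing from the $2$-rigid planar spanning subgraph to a spanning planar Laman subgraph, applying Kir\'aly's result, and then using monotonicity of infinitesimal rigidity under edge addition to conclude for $G$.
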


\section{The torus}

In this section, we reprove~\Cref{thm:toruscylinder}.  Let $(G, \psi)$
be an embedded graph and $C$ and $D$ be vertex-disjoint cycles of $G$.  We say that $C$ is
\defn{noncontractible} if $\psi(C)$ is a noncontractible circle in $\mathbb{M}$.  We say that $C$ and $D$ are \defn{homotopic} if $\psi(C)$ and $\psi(D)$ are homotopic.

We require the following theorem of Schrijver~\cite{schrijver93}. 

\begin{thm}[\cite{schrijver93}] \label{thm:torusfacewidth}
Every graph embedded on the torus with facewidth at least $t$ contains $\lfloor \frac{3t}{4} \rfloor$ vertex-disjoint noncontractible cycles.  
\end{thm}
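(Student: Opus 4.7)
My plan is to work in the universal cover of the torus and exploit the explicit lattice structure of its homotopy classes. Every noncontractible simple closed curve on $\mathbb{T}_1$ is freely homotopic to a curve determined, up to sign, by a primitive vector $v = (p,q) \in \mathbb{Z}^2$, so any noncontractible cycle in $G$ has a well-defined (free) homotopy class. The goal is to find many vertex-disjoint cycles all lying in a single well-chosen homotopy class, and to bound the count in terms of the facewidth $t$.

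I would first fix a primitive vector $v \in \mathbb{Z}^2$ and pass to the intermediate covering $\mathbb{R}^2/\langle v\rangle$, which is an infinite open cylinder $\Sigma_v$. The graph $(G,\psi)$ lifts to an embedded graph $(G_v,\psi_v)$ in $\Sigma_v$, and vertex-disjoint noncontractible cycles in $G$ of homotopy class $\pm v$ correspond bijectively to collections of vertex-disjoint cycles in $G_v$ that separate the two ends of $\Sigma_v$. By Menger's theorem, the maximum number of such vertex-disjoint end-separating cycles in $G_v$ equals the minimum size of a vertex cut between the two ends. Any such cut projects back to a vertex set in $G$ that meets every noncontractible circle in a homotopy class transverse to $v$, so by the definition of facewidth it has size at least $t$. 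This already yields a single vertex-disjoint family of cycles in a fixed class, but the resulting bound is not yet sharp.

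To reach $\lfloor 3t/4\rfloor$ I would combine information from two primitive classes $v_1, v_2$ forming a basis of $\mathbb{Z}^2$. The algebraic intersection number of a $v_1$-cycle with a $v_2$-cycle is $\pm 1$, so the two families compete: a large vertex-disjoint family in one class forces any disjoint family in the other to be smaller, but both counts cannot collapse at once. Setting up the fractional LP relaxation of the disjoint-cycles problem on the torus, and assembling the vertex cuts dual to $v_1$ and $v_2$ into a single dual object via LP duality, leads to a lower bound of the form $\max_v \nu(v) \geq \lfloor 3t/4\rfloor$, where $\nu(v)$ denotes the maximum number of vertex-disjoint cycles in class $v$.

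The main obstacle is the sharp constant $3/4$. A naive argument either gives only one disjoint family in one class (bounded by $t$ fractionally, but losing an integrality factor), or removes shortest noncontractible cycles greedily and loses much more. The proof really requires the polyhedral structure of the set of ``disjoint noncontractible cycle packings'' on the torus, together with a rounding argument that loses at most a factor $4/3$ when passing from the fractional LP value to an integer packing; the arithmetic of primitive vectors in $\mathbb{Z}^2$ and the integrality of minimum cuts in cylindrical (planar) graphs are essential to control this loss. My proposal is therefore to follow the structure of Schrijver's original LP-based argument for the rounding step rather than attempt a substantially simpler combinatorial substitute, since the latter does not appear to yield the tight constant.
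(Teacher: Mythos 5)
The statement you are proving is not proved in the paper at all: Theorem~\ref{thm:torusfacewidth} is imported as a black box from Schrijver's paper \cite{schrijver93}, so there is no in-paper argument to compare against. Judged on its own terms, your proposal is a plausible outline of the setting of Schrijver's proof (homotopy classes as primitive vectors in $\mathbb{Z}^2$, cylindrical covers, a min--max for packings within a fixed class) but it is not a proof, for two reasons.

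First, the min--max step is misstated. In the cylinder $\Sigma_v$, cycles separating the two ends are the topological \emph{cuts}; their Menger-type duals are arcs joining the two ends, not ``vertex cuts between the two ends'' (which are dual to end-to-end \emph{paths}). The correct statement is that the maximum number of disjoint end-separating cycles equals the minimum, over arcs joining the ends, of the number of vertices of $G_v$ on the arc, and relating that quantity back to the facewidth of $(G,\psi)$ on the torus requires accounting for the fact that a curve in class $w$ must cross a cycle in class $v$ at least $|\det(v,w)|$ times; done carefully this yields only a bound of roughly $t/2$, not $\lfloor 3t/4\rfloor$. Second, and more importantly, the entire content of the theorem is the constant $3/4$, and your proposal explicitly defers that step to ``the structure of Schrijver's original LP-based argument.'' The passage from the per-class packing min--max to the $3/4$ bound is a genuine geometry-of-numbers argument about the norm on $\mathbb{R}^2$ induced by the crossing function on homotopy classes, and nothing in your sketch (the ``two competing classes'' heuristic or the unspecified LP rounding) supplies it. As written, the proposal establishes at best the weaker $t/2$-type bound and cites the theorem's own proof for the rest.
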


We also require the following well-known fact; a proof is included for completeness.

\begin{fact}\label{fact:cycleshomotopic}
    Any two noncontractible circles in the torus that are disjoint are also homotopic.
\end{fact}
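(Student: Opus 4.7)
The plan is to cut the torus along $C$, reducing the problem to analyzing $D$ as a simple closed curve inside an annulus. First, I would show that any noncontractible circle in the torus is non-separating. If $C$ were separating, then $\mathbb{T}_1 \setminus C$ has exactly two components, whose closures $M_1$ and $M_2$ are each compact orientable surfaces with a single boundary circle, and inclusion--exclusion gives $\chi(M_1) + \chi(M_2) = \chi(\mathbb{T}_1) = 0$. Writing $\chi(M_i) = 1 - 2g_i$ with $g_i \geq 0$, the only solution is $\{g_1, g_2\} = \{0, 1\}$, forcing one of the $M_i$ to be a disk with boundary $C$ and contradicting the noncontractibility of $C$. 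Hence $C$ is non-separating, and cutting along $C$ produces a connected compact orientable surface $\tilde{M}$ with two boundary circles and $\chi(\tilde{M}) = 0$; the classification of surfaces identifies $\tilde{M}$ as an annulus, so $\mathbb{T}_1 \setminus C$ is homeomorphic to $S^1 \times (0,1)$.

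Next, $D$ is a simple closed curve inside the open annulus $\mathbb{T}_1 \setminus C$. A standard fact is that any simple closed curve in $S^1 \times (0,1)$ either bounds a disk there or is isotopic to the core circle. The first possibility would produce a disk in $\mathbb{T}_1$ bounded by $D$, contradicting the noncontractibility of $D$, so $D$ is isotopic to the core and separates the open annulus into two open sub-annuli $\Sigma_1$ and $\Sigma_2$.

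Finally, I would identify these sub-annuli as the components of $\mathbb{T}_1 \setminus (C \cup D)$. Inside $\tilde{M}$, the topological boundary of each $\Sigma_i$ is $D$ together with one of the two boundary circles of $\tilde{M}$; after regluing to form $\mathbb{T}_1$, both of those boundary circles become $C$, so each $\Sigma_i$ is an open cylinder in $\mathbb{T}_1$ with $\bd(\Sigma_i) = C \cup D$, matching the paper's definition of homotopic. The main obstacle I anticipate is carefully tracking the topological boundaries through the cut-and-reglue step, since the definition used here is the concrete ``cylinder component'' version rather than the homotopy-group version; once that bookkeeping is done, the rest is standard $2$-manifold topology.
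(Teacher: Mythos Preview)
Your proposal is correct and follows essentially the same strategy as the paper: reduce to analysing the second circle inside an open cylinder obtained from the complement of the first. The paper's execution differs slightly in that it takes a small closed cylindrical neighbourhood $\Sigma_1$ of $C_1$ (rather than cutting along $C_1$ itself), asserts directly that $\mathbb{T}_1 \setminus \Sigma_1$ is an open cylinder, and then concludes via transitivity through the boundary circles $D_1, D_2$ of $\Sigma_1$; your version instead uses an Euler-characteristic count and the classification of surfaces to justify explicitly that $\mathbb{T}_1 \setminus C$ is an annulus, and then reads off the cylinder component of $\mathbb{T}_1 \setminus (C \cup D)$ directly. Your route is a bit more self-contained on the ``why is the complement a cylinder'' step, while the paper's tubular-neighbourhood trick avoids having to track boundaries through a cut-and-reglue; both arrive at the same endpoint with comparable effort.
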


\begin{proof}
Let $C_1$ and $C_2$ be disjoint noncontractible circles on the torus $\mathbb{T}_1$.  Since $C_2$ is disjoint from $C_1$, by compactness, $C_2$ is also disjoint from a sufficiently small neighbourhood of $C_1$.  In other words, there is a (closed) cylinder $\Sigma_1 \subseteq \mathbb{T}_1$ such that $C_1 \subseteq \Sigma_1 \setminus \bd(\Sigma_1)$ and $C_2 \cap \Sigma_1=\emptyset$.  Since $C_1$ is noncontractible, $\Sigma_2:=\mathbb{T}_1 \setminus \Sigma_1$ is an (open) cylinder.  Let $D_1$ and $D_2$ be the boundary components of $\Sigma_1$ (and thus also the boundary components of $\cl(\Sigma_2$)).  By construction, $C_1$ is homotopic to $D_1$.   
Since $C_2\subseteq \Sigma_2$ is noncontractible, we conclude that $\Sigma_2 \setminus C_2$ has two components $\Sigma_2^1$ and $\Sigma_2^2$ each of which is an (open) cylinder.  Moreover, for each $i \in [2]$, the boundary components of $\cl(\Sigma_2^i)$ are $C_2$ and $D_i$.  Hence, $C_2$ is homotopic to $D_1$ (and hence also to $C_1$). 
\end{proof}

\begin{proof}[Proof of~\Cref{thm:toruscylinder}]
Let $(G, \psi)$ be a triangulation of the torus $\mathbb{T}_1$.  Since $(G, \psi)$ has facewidth at least 3,  $G$ contains two vertex-disjoint noncontractible cycles $C_1$ and $C_2$ by~\Cref{thm:torusfacewidth}. By~\Cref{fact:cycleshomotopic}, $\psi(C_1), \psi(C_2)$ are homotopic. Thus, $\psi(C_1) \cup \psi(C_2)$ bounds a cylinder $\Sigma(C_1, C_2)$ in $\mathbb{T}_1$.

We choose $C_1$ and $C_2$ such that $\Sigma(C_1, C_2)$ is inclusionwise maximal in $\mathbb{T}_1$. 
We claim that $\Sigma(C_1, C_2)$ contains all vertices of $G$. If not, then there exists a triangle $T=xyz$ of $G$ such that $\psi(T)$ is the boundary of a face of $(G, \psi)$, with $xy \in E(C_2)$ and $\psi(z) \notin \Sigma(C_1, C_2)$. Thus, $C_1$ and $(C_2 \cup \{xz, yz\}) \setminus \{xy\}$ contradicts the choice of $C_1$ and $C_2$.  
\end{proof}

\section{The high facewidth case}

In this section, we prove~\Cref{thm:highfacewidth}.  Let $(G, \psi)$ be a graph embedded on a surface $\mathbb{M}$ and $C$ be a cycle of $G$.  We say that $C$ is \defn{surface nonseparating} if the set $\mathbb{M}\setminus\psi(C)$ is (topologically) connected.  A key ingredient is the following theorem of Brunet, Mohar, and Richter~\cite{BMR96}. 

\begin{thm}[{\cite[Corollary~7.2]{BMR96}}]\label{thm:nestedcycles}
Let $h \geq 1$, $t \geq 3$, and $(G,\psi)$ be a graph embedded on $\mathbb{T}_h$ with facewidth at least $t$. Then $(G,\psi)$ contains at least $\lfloor \frac{t-1}{2} \rfloor$ vertex-disjoint surface nonseparating cycles that are  pairwise homotopic. 
\end{thm}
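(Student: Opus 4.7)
The plan is to prove the result by induction on the genus $h$, drawing inspiration from the strategy used in \Cref{thm:toruscylinder}. For the base case $h=1$, I would apply \Cref{thm:torusfacewidth} to obtain $\lfloor 3t/4 \rfloor$ vertex-disjoint noncontractible cycles in $(G,\psi)$. Every noncontractible simple closed curve on $\mathbb{T}_1$ is surface nonseparating, since its complement is an open cylinder, and by \Cref{fact:cycleshomotopic} any two disjoint noncontractible cycles on the torus are pairwise homotopic. Since $\lfloor 3t/4 \rfloor \geq \lfloor (t-1)/2 \rfloor$ for $t \geq 3$, this case is immediate.

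For the inductive step with $h \geq 2$, the plan is first to produce a single surface nonseparating cycle $C \subseteq G$. Such a cycle exists: the facewidth condition $t \geq 3$ guarantees a noncontractible cycle in $G$, and if this cycle happens to be separating, we can replace it by a noncontractible cycle lying in the positive-genus component of its complement. Next, cut the surface along $\psi(C)$ and cap each of the two resulting boundary circles with a triangulated disk to obtain an embedded graph $(G',\psi')$ on $\mathbb{T}_{h-1}$. Apply the inductive hypothesis to $(G',\psi')$ to obtain a family of pairwise homotopic nonseparating cycles on $\mathbb{T}_{h-1}$, then lift this family to $G$: a cycle of $G'$ disjoint from both new cap-vertices corresponds naturally to a cycle of $G$ on $\mathbb{T}_h$ that is disjoint from $C$. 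A bounded pigeonhole step over the at most finitely many homotopy classes in $\mathbb{T}_h$ that a single class on $\mathbb{T}_{h-1}$ can split into will then yield a pairwise homotopic subfamily on $\mathbb{T}_h$ of the desired size, and the surface nonseparating property is preserved for those lifts which are not homotopic to $C$ itself.

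The most delicate step is controlling both the facewidth of $(G',\psi')$ relative to $(G,\psi)$ and the transition between homotopy and separation types under the cut-and-cap operation. For the sharp bound $\lfloor (t-1)/2 \rfloor$ to survive, each inductive step can only lose an additive constant in $t$, and the pigeonhole over the bounded number of ways a homotopy class on $\mathbb{T}_{h-1}$ can lift to $\mathbb{T}_h$ must not be too wasteful; this is likely the main obstacle of the proof. If the inductive bookkeeping proves too lossy, an alternative direct approach would construct the required cycles as nested ``layers'' in an annular neighborhood of a single shortest noncontractible nonseparating simple closed curve $\gamma$ in $\mathbb{T}_h$ that meets $\psi(G)$ in exactly the facewidth --- extracting one cycle per graph-distance layer of $G$ away from $\gamma$ and using the facewidth lower bound to guarantee enough nested layers before the annulus is filled, with the factor of $\tfrac{1}{2}$ arising because each layer ``uses up'' both an inner and outer band of $\gamma$.
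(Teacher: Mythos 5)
This statement is not proved in the paper at all: it is quoted verbatim as Corollary~7.2 of Brunet, Mohar, and Richter~\cite{BMR96}, so there is no internal proof to compare against. Judged on its own merits, your base case is fine (on $\mathbb{T}_1$ noncontractible implies nonseparating, and \Cref{thm:torusfacewidth} plus \Cref{fact:cycleshomotopic} give what you need), but the inductive step has a genuine quantitative gap that I do not think can be repaired. First, cutting along $\psi(C)$ and capping does not preserve facewidth up to an additive constant: a noncontractible curve of $\mathbb{T}_{h-1}$ may run through the two capping disks and meet very little of $G$, and even when $C$ is chosen to realize the facewidth the standard guarantees are only of the form ``facewidth drops by at most a constant factor.'' Second, your pigeonhole step for lifting homotopy also loses a constant factor: a family $D_1,\dots,D_m$ of disjoint pairwise homotopic cycles in $\mathbb{T}_{h-1}$ is nested, each capping disk can destroy one of the consecutive cylinders $\Sigma(D_i,D_{i+1})$, and you can only keep the largest contiguous block that survives, i.e.\ roughly $m/3$ cycles. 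Compounding a constant-factor loss over $h$ induction steps yields a bound like $t/c^{\,h}$, whereas the theorem asserts $\lfloor (t-1)/2\rfloor$ \emph{independently of $h$}. Tellingly, the paper's own \Cref{lem:homtopiccyles} uses exactly your cut-and-recurse scheme, and precisely because of these losses its threshold $\phi(h,q)$ is defined by the recursion $\phi(h,q)=(2q+2)(\phi(h-1,q+2)+5)+1$ and grows superexponentially in $h$; this is why the authors cite \cite{BMR96} for the sharp statement rather than deriving it this way.

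Your fallback sketch (nested layers around a curve $\gamma$ realizing the facewidth) is much closer in spirit to how results of this type are actually proved --- one works inside a neighbourhood of $\gamma$ and repeatedly extracts cycles of $G$ homotopic to $\gamma$, using that $\gamma$ minimizes crossings with $\psi(G)$ in its homotopy class --- but as written it is only a heuristic: the key point, that each ``layer'' really contains a cycle of $G$ freely homotopic to $\gamma$ and that the layers are pairwise vertex-disjoint, is the entire content of the Brunet--Mohar--Richter argument and is missing here. There is also a smaller gap earlier: your procedure for producing a single surface nonseparating cycle (``replace a separating noncontractible cycle by one in the positive-genus component'') needs an argument that the iteration terminates and that the subsurface still contains a noncontractible cycle of $G$; this is true but not for the reason you give.
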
 

For all $h,q$ we let $\mathbb{T}_{h,q}$ denote the surface with $h$ handles and $q$ holes.  We begin by proving the following lemma.

\begin{lem} \label{lem:homtopiccyles}
    There exists a computable function $\phi(h,q)$ such that the following holds.  Let $(G, \psi)$ be a graph embedded on $\mathbb{T}_{h,q}$ such that $h \geq 1$ and $(G,\psi)$ has facewidth at least $\phi(h,q)$ on $\overline{\mathbb{T}_{h,q}}$.   
    Then $(G,\psi)$ contains vertex-disjoint surface nonseparating cycles $C_1, C_1', \dots, C_h, C_h'$ such that $C_i$ is homotopic to $C_i'$ on $\mathbb{T}_{h,q}$ for all $i \in [h]$, and $C_i$ is not homotopic to $C_j$ on $\mathbb{T}_{h,q}$ for all distinct $i,j \in [h]$. 
\end{lem}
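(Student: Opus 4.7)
The plan is to prove the lemma by induction on $h$, with $\phi(h,q)$ satisfying $\phi(1,q) \geq 2q+5$ and $\phi(h,q) \geq \max\{2q+5,\ \phi(h-1, q+2)\}$ for $h \geq 2$; for instance $\phi(h,q) := 4h+2q+1$ works.

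For the base case $h = 1$, I apply \Cref{thm:nestedcycles} to $(G,\psi)$ viewed inside $\overline{\mathbb{T}_{1,q}} = \mathbb{T}_1$, obtaining at least $q+2$ disjoint pairwise homotopic surface nonseparating cycles. These partition $\mathbb{T}_1$ into $q+2$ cylinders; since at most $q$ of them can contain a capping disk, at least one cylinder is disjoint from all the capping disks, and I take its two boundary cycles as $C_1$ and $C_1'$.

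For the inductive step $h \geq 2$, the same pigeonhole argument selects two homotopic nonseparating cycles $C_h, C_h'$ in $\mathbb{T}_h$ whose cobounding cylinder $A$ lies in $\mathbb{T}_{h,q}$. Let $R := \mathbb{T}_{h,q} \setminus \cl(A)$; an Euler characteristic calculation shows $R \cup C_h \cup C_h' \cong \mathbb{T}_{h-1, q+2}$. I plan to apply the inductive hypothesis to $(G|_R, \psi|_R)$ in $\mathbb{T}_{h-1,q+2}$, which requires facewidth at least $\phi(h-1,q+2)$ in $\mathbb{T}_{h-1} = \overline{\mathbb{T}_{h-1,q+2}}$. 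This reduces to showing that every noncontractible circle $\gamma$ in $\mathbb{T}_{h-1}$, homotoped to lie in $R$, is also noncontractible in $\mathbb{T}_h$. If it were not, then minimizing $|D \cap (C_h \cup C_h')|$ over bounding disks $D$ of $\gamma$ in $\mathbb{T}_h$, any innermost intersection component would be a simple closed curve in the simple closed curve $C_h$ or $C_h'$ and hence would equal that cycle in its entirety; the enclosed subdisk would then exhibit $C_h$ or $C_h'$ as bounding a disk in either the cylinder $A$ or in $R \cup \text{(caps)} \cong \mathbb{T}_{h-1,2}$, both impossible by nonseparation. Hence $D$ avoids $C_h \cup C_h'$, so $D \subset R \cup \text{(caps)}$ and $\gamma$ is contractible in $\mathbb{T}_{h-1}$---a contradiction. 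Consequently $|\gamma \cap \psi(G|_R)| = |\gamma \cap \psi(G)| \geq \phi(h,q) \geq \phi(h-1,q+2)$.

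The main obstacle is lifting the inductive conclusions from $\mathbb{T}_{h-1,q+2}$ back up to $\mathbb{T}_{h,q}$. Homotopy and nonseparation transfer upward directly. For non-homotopy, suppose $C_i$ and $C_j$ are homotopic in $\mathbb{T}_{h,q}$ via the open-cylinder component $\Sigma^\circ$ of $\mathbb{T}_{h,q} \setminus (C_i \cup C_j)$. Each of $C_h, C_h'$ is disjoint from $C_i \cup C_j$, so is either disjoint from $\Sigma^\circ$ or entirely contained in it. If $C_h \subset \Sigma^\circ$, then $C_h$ is parallel to $\partial \Sigma = C_i \cup C_j$ inside the cylinder, and a subcylinder shows $C_h \sim C_i$, reducing to the case $j = h$. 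When $\Sigma^\circ$ avoids $C_h \cup C_h'$, the fact that $\partial \Sigma^\circ \subset R$ forces $\Sigma^\circ \subset R$, giving $C_i \sim C_j$ in $\mathbb{T}_{h-1,q+2}$; when $i, j < h$ this contradicts the induction, and when $j = h$ it makes $C_i$ parallel to the boundary component $C_h$ of $\mathbb{T}_{h-1,q+2}$, hence separating there, contradicting the nonseparation of $C_i$. Making this surgery and case analysis precise is the main technical work.
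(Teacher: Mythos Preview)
Your overall inductive scheme matches the paper's, but there is a genuine gap in the facewidth-preservation step, and it is exactly the step where the paper's recursion becomes super-exponential while yours stays linear.

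You write that the required facewidth bound for $(G|_R,\psi|_R)$ in $\overline{\mathbb{T}_{h-1,q+2}}$ ``reduces to showing that every noncontractible circle $\gamma$ in $\mathbb{T}_{h-1}$, homotoped to lie in $R$, is also noncontractible in $\mathbb{T}_h$,'' and then conclude $|\gamma\cap\psi(G|_R)|=|\gamma\cap\psi(G)|\ge\phi(h,q)$. The topological claim (noncontractible in $\overline{R}$ and lying in $R$ $\Rightarrow$ noncontractible in $\overline{\mathbb{T}_{h,q}}$) is correct, but the reduction is not. Facewidth is the minimum of $|\gamma\cap\psi(G|_R)|$ over \emph{all} noncontractible circles $\gamma$ in $\overline{R}$, not only those you have already homotoped into $R$. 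The two new capping disks bounded by $C_h$ and $C_h'$ are large faces of $(G|_R,\psi|_R)$ in $\overline{R}$ with no graph in their interiors, so a noncontractible $\gamma$ can take a ``free shortcut'' through such a disk: its only forced intersections there are the two crossings of the bounding cycle. Homotoping such a $\gamma$ into $R$ means dragging it across $C_h$ (or $C_h'$) and into the adjacent triangles, which can \emph{increase} $|\gamma\cap\psi(G|_R)|$ arbitrarily; so the bound you obtain for the homotoped curve says nothing about the original one. In particular, with your choice $\phi(h,q)=4h+2q+1$ one has $\phi(h,q)=\phi(h-1,q+2)$, so you would need the facewidth not to drop at all, which this shortcut phenomenon defeats.

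This is precisely why the paper does not cut between just two cycles. It first finds $\ell=\phi(h-1,q+2)+4$ pairwise-homotopic nonseparating cycles $D_1',\dots,D_\ell'$, then removes only the thin cylinder between the \emph{middle} pair $D_a',D_{b}'$ (with $a\approx\ell/2$, $b=a+3$). Now any noncontractible circle in $\overline{\mathbb{M}}$ that enters the capping face bounded by $D_a'$ must, back in $\mathbb{T}_{h,q}$, cross every $D_i'$ with $i\le a$ at least twice (they are all homotopic to $D_a'$), so it already meets $\psi(G')$ in at least $2a\ge\phi(h-1,q+2)$ points; similarly on the $D_b'$ side. The buffer cycles are what salvage the facewidth for curves using the caps, and they force the recursion $\phi(h,q)=(2q+2)(\phi(h-1,q+2)+5)+1$, giving the $O(h\cdot 2^h\cdot(2h-1)!!)$ bound rather than a linear one. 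Your proposal would need either to insert this buffer mechanism or to give a different argument bounding $|\gamma\cap\psi(G|_R)|$ directly for circles that meet the capping disks.
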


\begin{proof}
    We define $\phi(h,q)$ recursively as follows.  
    Let $\phi(1,q)=2q+5$ for all $q \geq 0$ and $\phi(h,q)=(2q+2)(\phi(h-1, q+2)+5)+1$ for all $q \geq 0$ and $h \geq 2$.
    We proceed by induction on $h$. 
    
    If $h=1$, then by applying~\Cref{thm:torusfacewidth} in $\overline{\mathbb{T}_{1,q}}$, we conclude that $(G,\psi)$ contains a collection of $q+2$ vertex-disjoint surface nonseparating cycles on $\overline{\mathbb{T}_{1,q}}$.  By~\Cref{fact:cycleshomotopic},
    these $q+2$ cycles are pairwise homotopic on $\overline{\mathbb{T}_{1,q}}$.   Order these $q+2$ cycles as $D_1, \dots, D_{q+2}$ such that for all $1 \leq i < j< k \leq q+2$, $\psi(D_i \cup D_k)$ bounds an open cylinder $\Sigma(D_i,D_k)$ on $\overline{\mathbb{T}_{1,q}}$ containing $\psi(D_j)$.
    Observe that the open cylinders $\Sigma(D_1,D_{2}), \ldots ,\Sigma(D_{q+1},D_{q+2})$ are pairwise disjoint.  Let $O_1, \dots , O_q$ be the boundaries of the holes of $\mathbb{T}_{1,q}$.  For each $i \in [q]$, let $\Delta(O_i)$ be the open disk in $\overline{\mathbb{T}_{1,q}}$ bounded by $O_i$. Since $\Delta(O_i)$ is disjoint from $\psi(G)$, we conclude that 
   $\Delta(O_i)$ is either contained within exactly one open cylinder $\Sigma(D_j,D_{j+1})$, or $\Delta(O_i)$ is disjoint from every open cylinder $\Sigma(D_1,D_2), \ldots , \Sigma(D_{q+1},D_{q+2})$.
   By the pigeonhole principle, there must be at least one index $a \in [q+1]$ such that $\Sigma(D_a, D_{a+1})$ is disjoint from $\bigcup_{i \in [h]}\Delta(O_i)$. Thus, $D_a$ and $D_{a+1}$ are homotopic in $\mathbb{T}_{1,q}$.  So, we may take $C_1=D_a$ and $C_1'=D_{a+1}$.  
    
    Thus, we may assume $h \geq 2$.  By~\Cref{thm:nestedcycles}, $(G,\psi)$ contains  $m:=\frac{\phi(h,q)-1}{2}$ vertex-disjoint surface nonseparating cycles that are pairwise homotopic on $\overline{\mathbb{T}_{h,q}}$.  Again, order these $m$ cycles as $D_1, \dots, D_m$ such that for all $1 \leq i < j< k \leq m$, $\psi(D_i \cup D_k)$ bounds an open cylinder $\Sigma(D_i,D_k)$ on $\overline{\mathbb{T}_{h,q}}$ containing $\psi(D_j)$. 
     Let $\ell:=\phi(h-1, q+2)+4$. By the definition of $\phi$, we have $m = (q+1)(\ell+1)$.
     For each $i \in [m-\ell+1]$ define $\Sigma_i:=\cl(\Sigma(D_{i}, D_{i+\ell-1}))$ and define $i$ to be \defn{unholy} if $\Sigma_i$ is disjoint from the boundary of each hole of $\mathbb{T}_{h,q}$. 
   Let $I:=\{1+(\ell+1)(i-1) : i \in [q+1]\}$.   Let $O$ be the boundary of an arbitrary hole of $\mathbb{T}_{h,q}$ and $\Delta(O)$ be the open disk in $\overline{\mathbb{T}_{h,q}}$ bounded by $O$. Since $\Delta(O)$ is disjoint from $\psi(G)$, we conclude that 
  $O$ intersects at most two of $\psi(D_1), \dots, \psi(D_m)$. 
  We claim that there is at most one $i \in I$ such that $O \cap \Sigma_i \neq \emptyset$.  Suppose not.  Let $i<j$ be  elements of $I$ such that $O$ intersects both $\Sigma_i$ and $\Sigma_j$.  Then either $O$ intersects $\psi(D_k)$ for all $k \in [i+\ell-1, j]$ or $O$ intersects $\psi(D_k)$ for all $k \in [1, i] \cup [j+\ell-1, m]$.  Since both these sets have size at least $3$, we conclude that $O$ intersects at least three of  $\psi(D_1), \dots, \psi(D_m)$, which is a contradiction.   Since $\mathbb{T}_{h,q}$ has only $q$ holes and $|I|=q+1$, it follows that at least one index $u \in I$ is unholy.  

    Define $D_i':=D_{i+u-1}$ for all $i \in [\ell]$.  By the definition of unholy, $D_1', \dots, D_\ell'$ are pairwise homotopic on $\mathbb{T}_{h,q}$ and each $\psi(D_i')$ is disjoint from $\bd(\mathbb{T}_{h,q})$.
    Let $a:=\lceil \frac{\phi(h-1, q+2)}{2} \rceil$, $b:=a+3$, $\mathbb{M}:=\mathbb{T}_{h,q} \setminus \Sigma(D_a',D_b')$, and $G':=\psi^{-1}(\psi(G) \cap \mathbb{M})$. Since $\psi(D_a' \cup D_b')$ is disjoint from $\bd(\mathbb{T}_{h, q})$, we conclude that $\mathbb{M}$ is homeomorphic to $\mathbb{T}_{h-1, q+2}$. 
    Moreover, every noncontractible circle $C$ on $\overline{\mathbb{M}}$ is either noncontractible on $\overline{\mathbb{T}_{h,q}}$, 
      or it crosses one of the faces bounded by $\psi(D_a')$ or $\psi(D_b')$. 
      If it crosses the face bounded by $\psi(D_a')$, then since $\psi(D_i')$ is homotopic to $\psi(D_a')$ for all  $i\in [a]$,
      it follows that $C$ intersects $\psi(D_i')$ at least twice for each $i \in [a]$. Similarly, if $C$ crosses the face bounded by $\psi(D_b')$ then $C$ intersects $\psi(D_i')$ at least twice for all $i \in \{b, b+1, \dots, \ell\}$. 
     We conclude that the facewidth of $(G',\psi|_{G'})$ on $\overline{\mathbb{M}}$ is at least $\min \{\phi(h,q), 2a, 2(\ell-b+1)\} \geq \phi(h-1, q+2)$. 
     
     By induction, the embedded graph $(G', \psi|_{G'})$ contains vertex-disjoint surface nonseparating cycles $C_1, C_1', \dots, C_{h-1}, C_{h-1}'$ such that $C_i$ is homotopic to $C_i'$ (on $\mathbb{M}$) for all $i \in [h-1]$, and $C_i$ is not homotopic to $C_j$ (on $\mathbb{M}$) for all distinct $i,j \in [h-1]$. 
     Setting $C_h=D_{a+1}'$ and $C_{h}'=D_{a+2}'$ we have that $C_1, C_1', \dots , C_h, C_h'$ are the required cycles in $(G,\psi)$. 
\end{proof}

We are now ready to prove~\Cref{thm:highfacewidth}.

\begin{proof}[Proof of~\Cref{thm:highfacewidth}]  
For all $h \geq 1$, define $\gamma(h)=\phi(h,0)$, where $\phi$ is the function from~\Cref{lem:homtopiccyles}.  The closed form of $\gamma(h)$ is the function described in \eqref{eq:gamma}.  
By~\Cref{lem:homtopiccyles}, $G$ contains vertex-disjoint surface nonseparating cycles $C_1, C_1', \dots, C_h, C_h'$ such that $C_i$ is homotopic to $C_i'$ for all $i \in [h]$, and $C_i$ is not homotopic to $C_j$ for all distinct $i,j \in [h]$.  For each $i \in [h]$, let $\Sigma(C_i, C_i')$ be an (open) cylinder on $\mathbb{M}$ bounded by $\psi(C_i \cup C_i')$.  Since $C_i$ and $C_j$ are vertex-disjoint and $C_i$ is not homotopic to $C_j$ for all distinct $i,j \in [h]$, 
we note that $\Sigma(C_i, C_i')$ and $\Sigma(C_j, C_j')$ are disjoint for all distinct $i,j \in [h]$.  
We choose a collection of cycles in $G$ such that $\bigcup_{i \in [h]} \Sigma(C_i, C_i')$ is inclusionwise minimal.   We claim that no $\Sigma(C_i, C_i')$ contains a vertex of $G$. Suppose for contradiction that this is not true. Then since $(G, \psi)$ is a triangulation, there must be some $i \in [h]$ and a triangle $T=xyz$ of $G$ such that $\psi(T)$ is the boundary of a face of $(G, \psi)$, with $xy \in E(C_i)$ and $\psi(z) \in \Sigma(C_i, C_i')$. Thus, replacing $C_i$ by $(C_i \cup \{xz, yz\}) \setminus \{xy\}$ contradicts the choice of $C_1, C_1', \dots, C_h, C_h'$.  

To complete the proof we let $\mathbb{M}':=\mathbb{M}  \setminus \bigcup_{i\in[h]} \Sigma(C_i, C'_i) $ and note that $\mathbb{M}'$ is a sphere with $2h$ holes.   
Thus, $H:=\psi^{-1}(\psi(G) \cap \mathbb{M'})$ is a spanning subgraph of $G$ such that $(H,\psi|_H)$ is a triangulated $\surface_{2h}$, as required.  
\end{proof}
 
We remark that~\Cref{lem:homtopiccyles} (with a noncomputable facewidth bound) is a special case of a much more general result by Robertson and Seymour~\cite{RS1988};   
who proved that for every fixed graph $H$ embedded on a surface $\mathbb{M}$ of Euler genus at least 1, there exists a constant $c(H)$ such that every graph $G$ embedded on $\mathbb{M}$ with facewidth at least $c(H)$ contains $H$ as a `surface minor'.  Roughly speaking, the surface minor relation is a refinement of the usual minor relation on graphs, where edge contractions and deletions are performed on $\mathbb{M}$. 
In contrast, our proof gives an explicit bound, and can be considered as part of an extensive line of work to obtain computable bounds for Robertson and Seymour's graph minors project (see for example~\cite{GHR18} and~\cite{wollan22}). 

Another related theorem of Thomassen~\cite[Theorem 3.3 with $d=1$]{Thomassen1993}  implies that for triangulations $G$ of $\mathbb{T}_h$ with facewidth at least 
$2^{h+4} -16$, there is a collection of $h$ pairwise disjoint and pairwise nonhomotopic cycles in $G$.

\section{Triangulations without spanning spheres with few holes}\label{subsec_Counterexample}
In this section we prove~\Cref{thm:fewerholes}.  Given a triangulation $T$, the \defn{face subdivision} of $T$ is obtained by adding a vertex into each face of $T$ and joining this vertex to each of the three vertices on the boundary of its corresponding face.

\begin{thm} \label{thm:facesubdivision}
     Let $\mathbb{M}$ be a closed surface with Euler genus $g \geq 1$ , $T$ be a triangulation of $\mathbb{M}$, and $(G,\psi)$ be the face subdivision of $T$.  Then for every $g' \leq g-1$, $(G,\psi)$ does not contain a spanning subgraph $(H,\psi|_H)$ which is a triangulation of $\surface_{g'}$.
\end{thm}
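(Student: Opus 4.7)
The plan is to derive a contradiction by double-counting edges, using the fact that each new vertex $v_F$ in the face subdivision has degree exactly $3$ in $G$, and hence at most $3$ in any spanning subgraph.

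Suppose for contradiction that $(H,\psi|_H)$ triangulates a subsurface $\mathbb{M}'\subseteq\mathbb{M}$ with $\mathbb{M}'\cong\surface_{g'}$ for some $g'\leq g-1$. Since $H$ is spanning, each $v_F$ lies in $\mathbb{M}'$. First I would argue that every face of $(H,\psi|_H)$ is a small triangle $v_Fab$ of $G$. The $3$-cycles of $G$ are exactly the small triangles together with the triangles $xyz$ of $T$ (facial or not), and the latter cannot bound a face of $H$: if $xyz$ is facial then the face of $T$ it bounds contains $v_F\in V(H)$, and if $xyz$ is non-facial then the disk it bounds in $\mathbb{M}$ must contain further vertices of $T$ (also in $V(H)$ by spanning). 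Combined with the degree bound and the fact that every vertex in a triangulated surface-with-boundary has degree at least $2$, each face $F=xyz$ of $T$ then falls into exactly one of three cases: (\textbf{A}) $v_F$ is interior to $\mathbb{M}'$ with $\deg_H(v_F)=3$, forcing the link of $v_F$ to be the triangle $xyz$ and all three small triangles of $F$ to be faces of $H$; (\textbf{B1}) $v_F\in\bd(\mathbb{M}')$ with $\deg_H(v_F)=3$, contributing exactly two small triangles of $F$ as faces of $H$; (\textbf{B2}) $v_F\in\bd(\mathbb{M}')$ with $\deg_H(v_F)=2$, contributing exactly one. Let $a,b,c$ count the faces of $T$ in the three cases.

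Next I would plug the resulting counts $|V(H)|=|V(T)|+a+b+c$, $|F(H)|=3a+2b+c$, and (summing degrees over the $v_F$'s) the number $3a+3b+2c$ of subdivision-type edges in $H$ into Euler's formula $|V(H)|-|E(H)|+|F(H)|=2-g'$ to extract the number of $T$-edges kept by $H$, and hence the number $n_2$ of $T$-edges deleted from $H$. For a second count, I would say that a face $F$ of $T$ \emph{non-requires} one of its boundary edges $e$ if the small triangle on $e$ at $v_F$ is not a face of $H$; Cases A, B1, B2 contribute $0,1,2$ non-requirements respectively. Since a $T$-edge is deleted from $H$ if and only if both incident faces non-require it, letting $n_1$ count the $T$-edges non-required by exactly one incident face gives $n_1+2n_2=b+2c$. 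Combining this identity with the Euler computation, and using $|F(T)|=2|V(T)|+2g-4$, the algebra should collapse to $n_1=-b-2(g-g')$; for $g>g'$ this is strictly negative, contradicting $n_1\geq 0$.

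The main obstacle is the structural step at the beginning---verifying that every face of $(H,\psi|_H)$ is a small triangle of $G$ and that the trichotomy at each $v_F$ is exhaustive. Once this is secured, the remaining calculations are routine.
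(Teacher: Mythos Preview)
Your argument is correct. The structural step you flag is in fact routine: since $H$ is spanning, any triangular face $D$ of $(H,\psi|_H)$ is an open disk in $\mathbb{M}$ containing no vertex of $G$; no edge of $G$ can then lie in the interior of $D$ (its endpoints would have to be among the three boundary vertices, and $G$ is simple), so $D$ is already a face of $(G,\psi)$, hence a small triangle. This disposes of facial and nonfacial all-black $3$-cycles simultaneously, without any case analysis on which side bounds a disk. The trichotomy at each $v_F$ then follows immediately from $2\le\deg_H(v_F)\le 3$ and the fact that an interior vertex of a simple triangulation has degree at least $3$.

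The paper's route is different in execution. It first chooses $H$ extremally (maximal as a subgraph, then with $g'$ minimal), which forces every boundary cycle to have length $\ge 4$ and every white boundary vertex to have degree exactly $2$ --- in your language, it eliminates Case~B1 entirely. It then \emph{deletes} the interior white vertices: each has a full triangular link, so this merges three small faces into one, producing a smaller triangulation $H'=H[B\cup W_2]$ of $\surface_{g'}$ with exactly $|W|$ faces, and a single Euler count on $H'$ gives $|W|\le 2|B|+2g'-4<2|B|+2g-4=|W|$. Your version trades the extremal reduction for the ``non-requires'' bookkeeping; this costs a little more algebra but applies directly to every spanning $H$, and the identity $n_1=-b-2(g-g')$ you obtain even shows as a by-product that Case~B1 is impossible when $g'=g$.
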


\begin{proof}
    We colour the vertices of $T$ black and colour the vertices in $V(G) \setminus V(T)$ white.  Let $B$ and $W$ be the set of black and white vertices respectively.  
    By an application of Euler's formula to $T$, we have $|W|= 2|B| +2g-4$, since $|W|$ is equal to the number of faces of $T$. 

    Towards a contradiction, suppose that $G$ has a spanning subgraph $H$ such that $(H,\psi|_H)$ is a triangulated $\surface_{g'}$ for some $g' \leq g-1$.  We choose $H$ to first be maximal under subgraph inclusion, and subject to that, then minimal with respect to $g'$.    
    Let $C_1, \dots , C_{g'}$ be the cycles in $H$ which are the boundaries of the holes of $\surface_{g'}$. The minimality of $g'$ implies that $|V(C_i)| \geq 4$ for all $i \in [g']$.  Let $X:= \bigcup_{i=1}^{g'} V(C_i)$, $W_1:=W \setminus X$ and $W_2:=W \cap X$. Since every vertex contained in $W_1$ is an interior vertex for the triangulation, $\deg_H(w) =3$ for all $w\in W_1$.

    Since $(H,\psi)$ is a triangulation, we have that $\deg_H(v) \geq 2$ for all $v \in V(H)$.
    We claim that $\deg_H(w) =2$ for all $w\in W_2$.  Towards a contradiction suppose $w \in V(C_j)$ for $1\leq j \leq g'$ is a white vertex and $\deg_H(w)=3$.  Let $x,y,z$ be the neighbors of $w$.  Since $w \in X$ and $\deg_H(w)=3$, exactly two of the three faces incident to $w$ are contained in $\surface_{g'}$. By symmetry, we may assume that the faces bounded by $wxz$ and $wyz$ are contained in $\surface_{g'}$, but the face bounded by $wxy$ is not contained in $\surface_{g'}$. Thus, $x$ and $y$ are the neighbors of $w$ on $V(C_j)$.  However, since $|V(C_j)| \geq 4$, $H \cup \{xy\}$ contradicts the maximality of $H$. Therefore, all white vertices in $W_2$ have degree 2 in $H$, as claimed.
   
    We now consider the induced subgraph $H' = H[B\cup W_2]$. 
    Observe that $(H',\psi|_{H'})$ is formed from $(H,\psi|_H)$ by deleting each vertex $w\in W_1$, and merging its 3 incident triangular faces into a single triangular face $F_w$. Since every face of $(H,\psi|_H)$ is incident to exactly one white vertex, these operations are disjoint and leave the faces incident to vertices of $W_2$ untouched. Thus $(H',\psi|_{H'})$ is a triangulation of $\surface_{g'}$ with exactly $|W_1|+|W_2| =|W|$ faces.
    
    Let $D_1, \dots, D_{g'}$ be the cycles of $H'$ which are the boundaries of the holes of $\surface_{g'}$, and let $\ell:=\sum_{i \in [g']} |V(D_i)|$. Consider $(H',\psi|_{H'})$ as embedded on the sphere $\overline{\surface_{g'}}$ with $|W|+ g'$ faces. Since each of the $|W|$ faces of $(H',\psi|_{H'})$ is triangular, we have $2|E(H')|=3|W|+\ell$.  Applying Euler's formula for planar embeddings gives $|W|=2|V(H')|+2g'-4 - \ell$.
    Since the white vertices are a stable set of $G$, we conclude that $W_2$ is also a stable set of $\bigcup_{i \in [g']} D_i$.  Thus, $|W_2| \leq \frac{1}{2} \sum_{i \in [g']} |V(D_i)|=\frac{\ell}{2}$.  Combining everything yields,
    \[
    |W|=2|V(H')|+2g'-4 - \ell  \leq 2|B|+2g'-4 < 2|B|+2g-4 = |W|,
    \]
    which is a contradiction. \qedhere 
\end{proof}

Note that~\Cref{thm:facesubdivision} implies~\Cref{thm:fewerholes} by taking $T$ to be a high facewidth triangulation in~\Cref{thm:facesubdivision}.

\section*{Acknowledgements}

This project originated from the Fields Institute Focus Program on Geometric Constraint Systems.
The authors are grateful to the Fields Institute for their hospitality and financial support. 
K.\,C.\ was supported by the Australian Government through the Australian Research
Council’s Discovery Projects funding scheme (project DP210103849). 
S.\,D.\ was supported by the Heilbronn Institute for Mathematical Research.
M.\,G.\ and T.\,H.\ were supported by the Institute for Basic Science (IBS-R029-C1).
A.\,N.\ was partially supported by EPSRC grant EP/X036723/1.
N.\,F.\ and T.\,H.\ thank the Institute for Basic Science in South Korea where part of this research was conducted.

\bibliographystyle{plainurl}
\bibliography{references}

\end{document}